\documentclass{article}
\usepackage[utf8]{inputenc}                                                     
\usepackage{amsmath, amssymb, amsthm}
\usepackage{pythonhighlight}
\usepackage{mathrsfs}
\usepackage{graphicx}

\newcommand{\ZZ}{\mathbb{Z}}
\newcommand{\QQ}{\mathbb{Q}}

\newcommand{\PP}{\mathbb{P}}
\newcommand{\As}{\mathsf{A}_{1,0}}
\newcommand{\AMZV}{\mathsf{A}_\mathsf{MZV}}
\newcommand{\ep}{\varepsilon}

\newcommand{\sh}{\mathsf{K}}
\newcommand{\cat}{\operatorname{\mathbf{cat}}}
\newcommand{\sha}{\,\includegraphics[height=2mm]{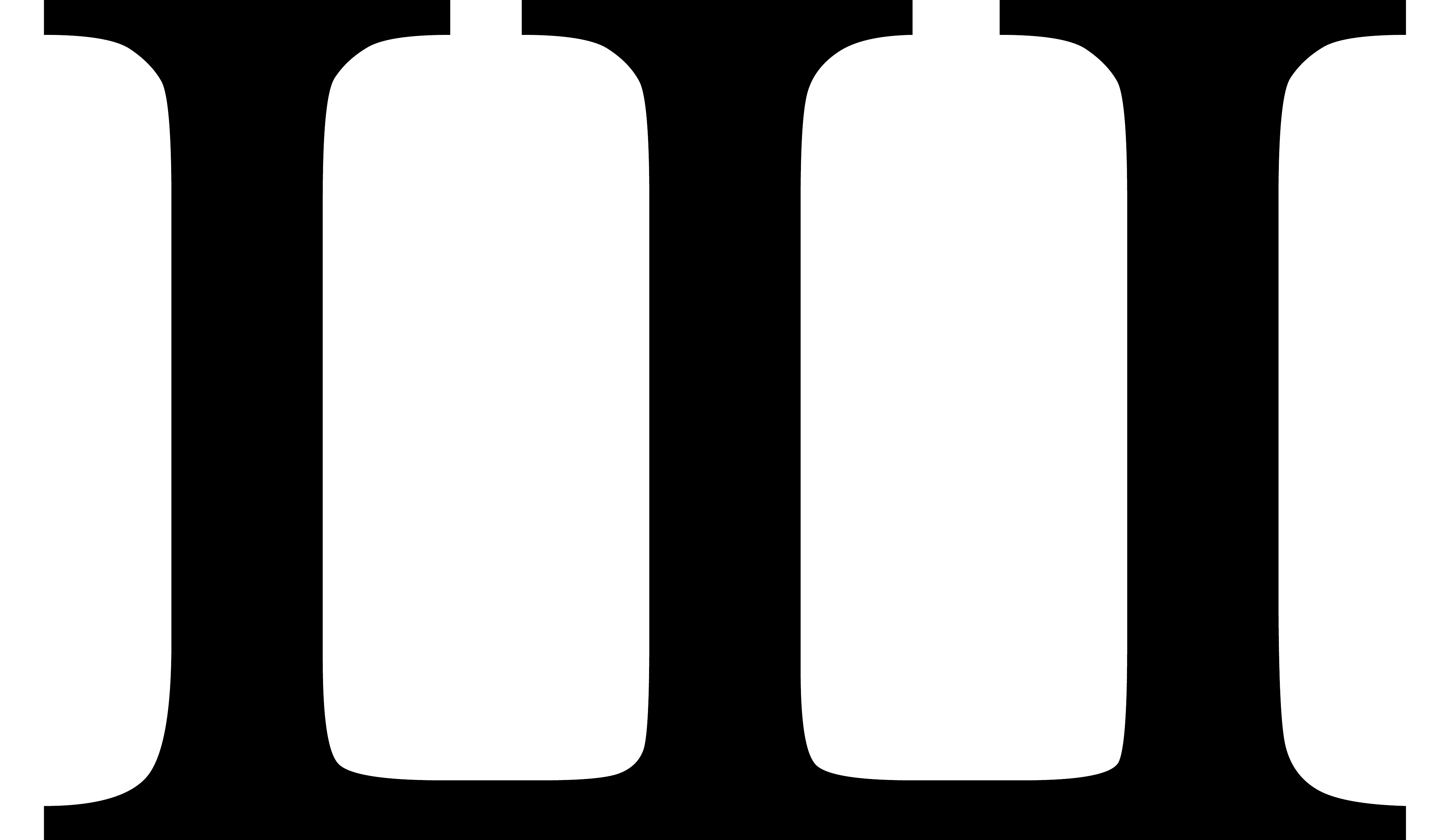}\,}

\newtheorem{thm}{Theorem}
\newtheorem{conjecture}{Conjecture}
\newtheorem{lemma}{Lemma}

\title{Zinbiel algebras and multiple zeta values}

\author{F. Chapoton}

\begin{document}

\maketitle

\section*{Introduction}

Multiple zeta values are the convergent iterated integrals from $0$ to
$1$ of the differential forms $\omega_0 = dt/t$ and
$\omega_1 = dt/(1-t)$. They form an algebra over $\QQ$, which has many
interesting connections with different domains, including knot theory
and perturbative quantum field theory \cite{waldschmidt, fresan}. This
algebra is expected to be graded by the weight, and a famous
conjecture of Zagier \cite{zagier} states that the dimensions of
homogeneous components are given by the Padovan numbers.

The algebra $\AMZV$ of motivic multiple zeta values is a more subtle
construction, in the setting of periods and mixed motives
\cite{brown_annals, brown_decomposition, fresan}. It can be defined as the
quotient of the commutative algebra $\As$, whose elements are seen as
formal iterated integrals of $\omega_0$ and $\omega_1$, by the
non-explicit ideal of all relations that can be proved using algebraic
geometry. This algebra is known to be graded by the weight and its
dimensions are given by the Padovan sequence, by results of
Brown \cite{brown_annals}.

There is a surjective morphism, called the period map, from the motivic algebra $\AMZV$ to the
usual algebra of multiple zeta values,
defined by taking the numerical value of a formal iterated
integral. This period map is expected to be injective, hence an
isomorphism.

\medskip

The aim of this article is to propose an algebraic construction, using
the algebraic structures known as zinbiel algebras or dual Leibniz
algebras, of some commutative sub-algebras $C_{u,v}$ of $\As$.

The notion of zinbiel algebras was introduced by Loday in relationship
with Leibniz algebras \cite{loday_dual_leibniz}. Their name is the reversal of
Leibniz, a play of words justified by the Koszul duality of the two
corresponding operads. Maybe a better name would be ``half-shuffle
algebras'', as they are very closely related to shuffle
algebras on words.

The algebras $C_{u,v}$, depending on algebraic parameters $u,v$, have
the same graded dimensions as the motivic algebra $\AMZV$.  Our main
conjecture is then that the restricted quotient map from $C_{u,v}$
to the motivic algebra $\AMZV$ is generically an isomorphism. The
special case when $u+v = 0$ seems to give instead an interesting
sub-algebra of $\AMZV$.

One interest of this construction is that it would provide new bases
of the algebra $\AMZV$ indexed by words in $2$ and $3$. Unlike the
known Hoffman basis indexed by the same set \cite{brown_annals, hoffman}, the shuffle product in any of these new bases can be expressed
easily in the same basis, as the shuffle product of words. On the
negative side, the description of the motivic coproduct in these bases
is not clear, and the reduction of standard multiple zeta values as a
linear combination is not simple either.

The two special cases with parameters $u,v$ being $1,0$ or $0,1$ are
specially interesting, as the conjectural bases obtained are then made
of some arborified multiple zeta values, as studied in \cite{manchon,
  clavier, ono}.

The construction of the sub-algebras $C_{u,v}$ is rather simple, as
the zinbiel sub-algebras generated inside $\As$ by two chosen elements
$z_2$ and $z_3$ in degrees $2$ and $3$. To show that they have the
correct dimensions, one just needs to prove that they are free as
zinbiel algebras. Similar results about freeness of zinbiel
sub-algebras of free zinbiel algebras have been obtained in \cite{free_dual_leibniz}.

It is possible that the same kind of ideas could be applied to some
variants of multiple zeta values, in particular to the alternating
multiple zeta values.

Acknowledgments: thanks to Francis Brown and Clément Dupont for their interest and useful suggestions.




\section{Zinbiel algebras}

Some references on zinbiel algebras are \cite{loday_dual_leibniz, dokas, loday_dialgebras}.

A zinbiel algebra over a commutative ring $R$ is a module $L$ over $R$
endowed with a bilinear product $\prec\, : L\otimes_R L \to L$ such that
\begin{equation}
  \label{axiom}
  (x \prec y) \prec z = x \prec (y \prec z) + x \prec (z \prec y)
\end{equation}
for all $x,y,z$ in $L$. It is then convenient to introduce the symmetrized product $\sha$ defined by
\begin{equation}
  x \sha y = x \prec y + y \prec x,
\end{equation}
for $x,y$ in $L$. It can be deduced from \eqref{axiom} that $\sha$ is
always a commutative and associative product on $L$.

We will always denote by $\prec$ the zinbiel product in a zinbiel algebra.

The free zinbiel algebra on a finite set $S$ over a field
$k$ has a very neat and simple description. The underlying vector
space has a basis indexed by non-empty words with letters in $S$. The
zinbiel product of two words $w$ and $w'$ is the sum of words in
the standard shuffle product of $w$ and $w'$ in which the first letter
comes from the first letter of $w$. Here is one way to remember this
rule: the symbol $\prec$ is pointing towards the word whose first letter
remains the first letter.

Let us introduce a convenient notation. For $a_1,\dots,a_n$ elements
of a zinbiel algebra, let
\begin{equation}
  \sh(a_1,\dots,a_n) = a_1 \prec \sh(a_2, \dots, a_n)
\end{equation}
be their right-parenthesized product, with $\sh(a_n) = a_n$ by convention.

In the free zinbiel algebra over a finite set $S$, the basis element
indexed by a word $(s_1,\dots,s_n)$ with letters in $S$ is exactly $\sh(s_1,\dots,s_n)$.

\section{Algebra of convergent words in $0,1$}

Let $A$ be the free zinbiel algebra on two generators $0$ and $1$.

By the general description of free zinbiel algebra recalled above, $A$
has a basis indexed by non-empty words in $0,1$. In this basis, the
product $\prec$ is one-half of the shuffle product. For example,
\begin{equation*}
 (\mathbf{1}0) \prec (10) = (\mathbf{1}010) + 2 (\mathbf{1}100),
\end{equation*}
where we have emphasized the letter that remains the first letter.
The associated commutative product $\sha$ is the standard shuffle
product. The unit for the commutative product must be added as the empty word.

The algebra $A$ is bigraded, by the number of $0$ and the number of
$1$ in a word.

Let $\mathscr{A}_{1,0}$ be the set of words starting with $1$ and
ending with $0$. Let $\As$ be the sub-space of $A$ spanned by
these words. Then clearly $\As$ is a zinbiel sub-algebra of $A$.

The algebra $\As$ is not a free zinbiel algebras, because of the
following relations. For every pair of words $x$ and $y$ in
$\mathscr{A}_{1,0}$, there holds
\begin{equation}
  \label{rels}
  (1x) \prec y = (1y) \prec x.
\end{equation}
Indeed, when seen in $A$, this equality becomes
\begin{equation*}
  (1 \prec x) \prec y = (1 \prec y) \prec x,
\end{equation*}
which is a consequence of the zinbiel axiom \eqref{axiom}.

\section{Free sub-algebras}

Our aim is to build, inside the algebra $\As$, free zinbiel
sub-algebras on two generators.

More precisely, let $u$ and $v$ be two parameters, not both zero and define
\begin{equation}
  z_2 = (1,0)\quad\text{and}\quad z_3 = u(1,0,0)+v(1,1,0)
\end{equation}
in $\As$.

Let $C_{u,v}$ be the zinbiel sub-algebra of $\As$ generated by
$z_2$ and $z_3$. Note that it only depend on the class of $(u,v)$ in
the projective line $\PP^1$.

As both $z_2$ and $z_3$ are homogeneous with respect to the total
grading of $\As$ by the length of words, $C_{u,v}$ inherits a grading where $z_2$ has
degree $2$ and $z_3$ has degree $3$.


For a word $w=(w_1,\dots,w_n)$ in the alphabet $\{2,3\}$, let us denote
\begin{equation}
  \sh_{u,v}(w) = \sh(z_{w_1},\dots, z_{w_n}).
\end{equation}

\begin{thm}
  For all $(u,v)$ not both zero, the sub-algebra  $C_{u,v}$ is a free zinbiel algebra on two generators over $\QQ$.
\end{thm}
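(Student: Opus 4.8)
The goal is to show the natural surjection from the free zinbiel algebra $F$ on two generators $x_2, x_3$ (of degrees $2$ and $3$) onto $C_{u,v}$, sending $x_2 \mapsto z_2$ and $x_3 \mapsto z_3$, is injective. Since both algebras are graded with the same generating degrees, it suffices to prove that the graded dimensions agree — equivalently, that the images $\sh_{u,v}(w)$, as $w$ ranges over words in $\{2,3\}$, are linearly independent in $\As$. The free zinbiel algebra on two generators in degrees $2, 3$ has, in degree $n$, dimension equal to the number of words in $\{2,3\}$ of total weight $n$ (the Padovan-type count), so a dimension count in each degree reduces everything to linear independence of $\{\sh_{u,v}(w) : |w| = n\}$ in the degree-$n$ part of $\As$.

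My plan is to exhibit, for each word $w = (w_1,\dots,w_\ell)$ in $\{2,3\}$, a distinguished "leading word" $L(w)$ in the alphabet $\{0,1\}$ appearing in the expansion of $\sh_{u,v}(w)$, chosen so that $w \mapsto L(w)$ is injective and so that $L(w)$ does not occur in $\sh_{u,v}(w')$ for any $w'$ whose leading word is "larger" in a suitable total order. Concretely, since $\sh_{u,v}(w) = \sh(z_{w_1}, \dots, z_{w_\ell})$ and $z_2 = (1,0)$, $z_3 = u(1,0,0) + v(1,1,0)$, the right-parenthesized zinbiel product unfolds into a signed (here: positive-coefficient, up to the $u,v$ scalars) sum over shuffles of the blocks $10$, $100$, $110$ in which the first letter of $z_{w_1}$ stays first. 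A natural candidate for $L(w)$ is the concatenation $z_{w_1}^{\circ}\, z_{w_2}^{\circ} \cdots z_{w_\ell}^{\circ}$ where one picks from $z_3$ the block $(1,0,0)$ if one sets up a generic argument, or more robustly: work first over $\QQ(u,v)$ and track the monomial in $u,v$ of highest degree attached to the concatenation word $(1,1,0)(1,1,0)\cdots$ — i.e. always select the $v$-block from each $z_3$. This concatenation word determines $w$ uniquely (the block decomposition of a word starting with $1$, ending with $0$, into pieces from $\{10, 110\}$ reading left to right is forced once we know it comes from such a product), and a shuffle of several blocks can only produce this concatenation as its lex-leading term when each block is contributed in order without interleaving — giving coefficient $1$ there — while lexicographically larger rearrangements are the ones that receive the interleaving contributions.

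The main obstacle, and the step requiring real care, is controlling the cross-terms: showing that the chosen leading word $L(w)$ genuinely does not appear in $\sh_{u,v}(w')$ for $w' \neq w$ of the same weight. This is not automatic, because shuffles of the blocks of $w'$ can conspire to produce $L(w)$. The clean way around this is to choose the order on words carefully — for instance, order words in $\{0,1\}^n$ by some statistic (such as: first compare the positions of the $1$'s, preferring $1$'s as far right as possible, which corresponds to selecting the $(1,1,0)$ block and never interleaving), and argue that this leading term of $\sh_{u,v}(w)$ is strictly minimal among all words appearing in $\sh_{u,v}(w')$ for $w'$ in a fixed degree, with the strictly minimal one occurring with a nonzero scalar (a power of $v$, or for the case $v=0$ a power of $u$ after re-choosing the leading block; the hypothesis that $u,v$ are not both zero is exactly what keeps the relevant scalar nonzero). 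One then gets a triangular change of basis from $\{\sh_{u,v}(w)\}$ to a subset of the word basis of $\As$, establishing linear independence and hence freeness. The relations \eqref{rels} in $\As$ must be checked to be consistent with this — they only relate words that are not of the chosen leading form — which is why the $1\cdots$ / $\cdots 0$ structure of $\As$ and the specific shapes of $z_2, z_3$ are used rather than working in all of $A$.
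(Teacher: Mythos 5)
Your overall architecture coincides with the paper's: reduce freeness to linear independence of the elements $\sh_{u,v}(w)$ for words $w$ in $\{2,3\}$ of a fixed weight, use the bigrading of $\As$ by the numbers of $0$'s and $1$'s to extract a leading term in which every $z_3$ contributes a single block (the block $(1,1,0)$ with coefficient a power of $v$ when $v\neq 0$, or $(1,0,0)$ with a power of $u$ when $v=0$), and then set up a triangular relation between these leading terms and the concatenation words $\cat w$. That reduction is sound and is exactly how the paper passes to the two cases $(u,v)=(1,0)$ and $(u,v)=(0,1)$.

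The genuine gap is that the triangularity itself --- the entire content of the two subsections on $C_{10,100}$ and $C_{10,110}$ --- is only announced (``argue that this leading term \dots is strictly minimal''), not proved, and the specific assertions you make about it are incorrect, which shows the step cannot be waved through. First, the coefficient of $\cat w$ in $\sh_{0,1}(w)$ is not $1$ in general: for $w=(3,2)$ there are two good-shuffles of $110$ and $10$ producing $11010$ (the identity, and the one interleaving the $10$-block into the $110$-block), and in general this diagonal coefficient is the product of the lengths of the blocks $322\cdots2$ of $w$; it is nonzero, which suffices over $\QQ$, but it is not $1$. Second, the transition matrix is genuinely triangular rather than diagonal: $\cat(3,2)=11010$ also occurs in $\sh_{0,1}((2,3))$, so the hoped-for statement that $L(w)$ does not occur in $\sh_{u,v}(w')$ for any $w'\neq w$ is false; one must fix the lexicographic order on words in $\{2,3\}$ and prove a one-sided vanishing of $M_{w,w'}$, and the correct side turns out to be opposite in the $100$ case and in the $110$ case, so no single ``minimality'' statement covers both. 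Establishing that one-sided vanishing requires the induction on the common prefix of $w$ and $w'$, tracking where a good-shuffle may send each $1$ and each $0$ of a block (the paper's Lemmas~\ref{lemme_facile} and~\ref{lemme_complexe}, the second of which needs a nontrivial case analysis). Without this combinatorial core the proof is incomplete.
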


\begin{proof}
  Using the grading of $C_{u,v}$, it is enough to prove that the
  elements $K_{u,v}(w)$ for all words $w$ in $2$ and $3$ of any given sum
  are linearly independent in $\As$. By using the bigrading of
  $\As$, one can instead prove the linear independence of the
  leading terms of these elements $K_{u,v}(w)$ with respect to either
  grading.

  If $u \not= 0$, the leading term of $K_{u,v}(w)$ with respect to the
  number of $0$ is a non-zero multiple of the element $K_{1,0}(w)$.

  If $v \not= 0$, the leading term of $K_{u,v}(w)$ with respect to the
  number of $1$ is a non-zero multiple of the element $K_{0,1}(w)$.

  It is therefore enough to prove the statement in the cases
  $(u,v)=(1,0)$ and $(u,v)=(0,1)$. This is done in the next two sections.
\end{proof}

Note that these two cases are really distinct, as there is no zinbiel
automorphism of $\As$ that would exchange them.

\begin{lemma}
  \label{shuffle_A10}
  Let $w_1$, $w_2$, \dots, $w_n$ be words in $\As$. Then
  $\sh(w_1,\dots,w_n)$ is the sum over all shuffles of the words $w_i$
  such that the first letters remain in the same order.
\end{lemma}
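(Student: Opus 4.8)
The plan is to argue by induction on $n$, peeling off the leftmost word $w_1$. For $n=1$ the claim is just the convention $\sh(w_1)=w_1$. For $n=2$ it is precisely the defining rule for the zinbiel product $\prec$ in the free zinbiel algebra on $\{0,1\}$ recalled above: $w_1\prec w_2$ is the sum of those interleavings of $w_1$ and $w_2$ in which the first letter is the first letter of $w_1$, which is exactly the condition that the two first letters keep their original order.

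For the inductive step I would write $\sh(w_1,\dots,w_n)=w_1\prec\sh(w_2,\dots,w_n)$, expand $\sh(w_2,\dots,w_n)$ by the induction hypothesis as the sum over all interleavings $v$ of $w_2,\dots,w_n$ whose first letters stay in their original order, and then use bilinearity of $\prec$ together with the $n=2$ rule applied to each word $v$. This exhibits $\sh(w_1,\dots,w_n)$ as a double sum: over such interleavings $v$, and over interleavings $v'$ of $w_1$ with $v$ in which the first letter comes from $w_1$. It then remains to match this double sum term by term with the asserted sum over interleavings of $w_1,\dots,w_n$ with first letters in order.

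The combinatorial heart of the argument — and the step I expect to require the most care — is this matching, which I would set up as a bijection. Given an interleaving of $w_1,\dots,w_n$ whose first letters are in order, delete the letters coming from $w_1$; since deletion preserves relative order, what remains is an interleaving $v$ of $w_2,\dots,w_n$ with first letters still in order, and the original datum is recovered as the interleaving of $w_1$ into $v$. Conversely a pair $(v,v')$ is sent to $v'$, read as an interleaving of $w_1,\dots,w_n$ by remembering the origin in $w_2,\dots,w_n$ of each letter of $v$. The one point to check is that the two ``leading'' conditions correspond: here one uses that in any interleaving of several words whose first letters appear in order, the overall first letter must be the first letter of the first word, because the letter in position $1$ is the first letter of whichever word it belongs to, hence one of those first letters, hence the minimal one. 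Applied to $v$ this says the first letter of $v$ is the first letter of $w_2$, so ``$v'$ starts with a letter of $w_1$'' becomes equivalent, once the first letters of $w_2,\dots,w_n$ are already known to be in order in $v'$, to ``the first letters of $w_1,\dots,w_n$ are in order in $v'$''. Summing the resulting bijection over all possible outcome words yields the claimed identity in $A$.
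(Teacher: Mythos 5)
Your proof is correct and follows exactly the route the paper indicates (the paper only says the lemma "is easily proved by induction, starting from the definition of $\prec$"); your induction on $n$ via $\sh(w_1,\dots,w_n)=w_1\prec\sh(w_2,\dots,w_n)$, together with the observation that a shuffle with first letters in order must begin with the first letter of $w_1$, supplies precisely the details the paper omits.
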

This is easily proved by induction, starting from the definition of $\prec$.

These shuffles will be called \textbf{good-shuffles}. The identity
shuffle is always a good-shuffle.

\subsection{Words $10$ and $100$}

Let $C_{10,100}$ be the zinbiel sub-algebra of $\As$ generated by
the words $10$ and $100$. Let us denote in this section the word $10$ by $2$ and the
word $100$ by $3$. The algebra $C_{10,100}$ is bigraded by the number
of $2$ and the number of $3$, as $10$ and $100$ are homogeneous in
$\As$ with linearly independent bidegrees.

For a word $w$ in the alphabet $\{2,3\}$, let $\cat w$ be the word in
$\As$ obtained from $w$ by the substitution $2 \mapsto 10$ and
$3 \mapsto 100$.

By Lemma \ref{shuffle_A10}, for a word $w = w_1\dots w_n$ in
$\{2,3\}$, the expansion of $\sh(w)=\sh(w_1,\dots,w_n)$ is a sum of
words in $\{0,1\}$ with the following property: the letters $1$ (one
in each $w_i$) remain in the same order. Moreover, every letter $0$
coming from $w_i$ is placed somewhere on the right of the letter $1$
coming from $w_i$.

\begin{thm}
  The zinbiel algebra $C_{10,100}$ is free over $\ZZ$.
\end{thm}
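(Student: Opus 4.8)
The plan is to establish freeness by exhibiting, for each word $w$ in $\{2,3\}$, a distinguished monomial appearing in the expansion of $\sh(w) = \cat w$-shuffle that occurs in no other $\sh(w')$, and to do so in a way that works over $\ZZ$ (so coefficients matter, not just nonvanishing). First I would make precise the combinatorial picture already set up: expanding $\sh(w_1,\dots,w_n)$ with each $w_i \in \{10,100\}$ gives a sum of good-shuffles, i.e. words in $\{0,1\}$ in which the $n$ letters $1$ stay in their original left-to-right order and each block of $0$'s coming from $w_i$ is interleaved to the right of the $i$-th $1$. The total number of $1$'s is $n$ and the number of $0$'s is $\#\{i : w_i = 3\} \cdot 2 + \#\{i : w_i = 2\}$, which together with $n$ recovers the bidegree, hence we may fix a bidegree and argue inside it.

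The key step is to single out the \emph{fully left-packed} word associated to $w$: the word obtained by the identity shuffle, namely $\cat w$ itself, which reads (for $w = w_1\cdots w_n$) as the concatenation $1 0^{a_1} 1 0^{a_2} \cdots 1 0^{a_n}$ where $a_i \in \{1,2\}$ according to $w_i \in \{2,3\}$. I claim $\cat w$ appears in $\sh(w)$ with coefficient exactly $1$, and appears in $\sh(w')$ with coefficient $0$ for every $w' \neq w$ of the same bidegree. The first assertion is immediate: among all good-shuffles of $w_1,\dots,w_n$, the only one producing $1 0^{a_1} \cdots 1 0^{a_n}$ is the identity shuffle, since in any other good-shuffle some $0$ from an earlier block would be pushed past a later $1$, or some later block's $0$'s would be pulled forward; in either case the maximal run of $0$'s immediately following the $i$-th $1$ would have the wrong length or the blocks would not be contiguous. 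For the second assertion, suppose $\cat w$ occurs in the expansion of $\sh(w')$ with $w' = w'_1\cdots w'_m$. Counting $1$'s forces $m = n$. Now read off, from left to right in $\cat w = 1 0^{a_1} \cdots 1 0^{a_n}$, the run lengths of $0$'s between consecutive $1$'s: these are $a_1, \dots, a_{n-1}$ and then $a_n$ at the end. In a good-shuffle of $w'_1,\dots,w'_n$, the $0$'s sitting strictly between the $i$-th and $(i+1)$-th $1$ can only come from $w'_1,\dots,w'_i$; and the block from $w'_i$ must lie entirely to the right of the $i$-th $1$. An easy induction on $i$ shows that to realize the run lengths $a_1,\dots,a_i$ with blocks that are each of size $1$ or $2$ and each "owned" by a position $\le i$, one is forced to have $a_j$ equal to the size of the block from $w'_j$ for every $j \le i$ — there is no room to "borrow" a $0$ from an earlier block, because earlier blocks have already been exactly consumed. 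Hence $w'_j = w_j$ for all $j$, i.e. $w' = w$.

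With this, linear independence over $\ZZ$ of $\{\sh(w)\}$ within each bidegree follows: if $\sum_w c_w \sh(w) = 0$ with $c_w \in \ZZ$, then for each fixed $w$ the coefficient of the monomial $\cat w$ on the left-hand side is $c_w$ (only $\sh(w)$ contributes, with coefficient $1$), so $c_w = 0$. Since $C_{10,100}$ is generated as a zinbiel algebra by $z_2, z_3$ and is spanned by the $\sh(w)$ (the right-parenthesized products suffice to span a zinbiel algebra generated by these elements, because the zinbiel axiom lets one rewrite any parenthesization in terms of right-combs with $\sha$-products of the generators — and here $z_2 \sha z_3$ etc. are themselves sums of $\sh$'s of longer words, so one must be slightly careful and instead argue that the $\sh(w)$ span by a dimension/degree induction using the axiom), and we have just shown these spanning elements are $\ZZ$-linearly independent, $C_{10,100}$ is the free zinbiel algebra on two generators over $\ZZ$.

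The main obstacle is the uniqueness claim in the key step — that $\cat w$ cannot arise from any $\sh(w')$ with $w' \neq w$. The subtlety is that a good-shuffle of $w'$ can move $0$'s quite far to the right, so a priori a word like $1011010$ (which is $\cat{323}$... ) might coincide with some shuffled term of a different $w'$; the induction must genuinely use that the blocks are left-anchored (each $0$-block from $w'_i$ stays right of the $i$-th $1$) together with the fact that block sizes are restricted to $\{1,2\}$, to pin down $w'_j = w_j$ position by position. Once that combinatorial lemma is in hand the rest is bookkeeping. I would also take care to phrase the spanning argument so that it is visibly valid over $\ZZ$ and not just over $\QQ$, which is automatic here since all structure constants of the shuffle and the passage from parenthesizations to right-combs are integral.
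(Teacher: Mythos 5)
Your overall strategy (bound the dimension above by that of the free zinbiel algebra, then extract the coefficient of the monomial $\cat w$ from the expansions $\sh(w')$ in a fixed bidegree) is the right one and is essentially the paper's. But your key combinatorial claim is false as stated: you assert that $\cat w$ occurs in $\sh(w')$ with coefficient $0$ for every $w'\neq w$ of the same bidegree, i.e.\ that the transition matrix between the $\sh(w)$ and the words $\cat w$ is the identity. Counterexample: take $w=(2,3)$ and $w'=(3,2)$, so $\cat w = 10100$. Expanding
\begin{equation*}
\sh(3,2) \;=\; (100)\prec(10) \;=\; 3\,(11000) \;+\; 2\,(10100) \;+\; 1\,(10010),
\end{equation*}
one sees that $10100=\cat{(2,3)}$ occurs in $\sh(3,2)$ with coefficient $2$, not $0$. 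The flaw is in the phrase ``earlier blocks have already been exactly consumed'': when $w'_j=3$, its block $00$ is anchored to the right of the $j$-th letter $1$ but need \emph{not} be used up before the $(j+1)$-th letter $1$; one of its two zeros can spill past later $1$'s into a later run, so a block of size $2$ can perfectly well realize a run of length $1$. That is exactly what happens above, and it breaks your induction and the subsequent coefficient-extraction step ``only $\sh(w)$ contributes''.

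What is true, and what the paper proves, is the one-sided statement: if $\cat w'$ occurs in $\sh(w)$ with $w\neq w'$, then at the first position $i$ where they differ one must have $w_i=3$ and $w'_i=2$. The reason is asymmetric: after showing (by induction, as you do) that any contributing good-shuffle is the identity on the common prefix, the single letter $1$ of $w'_i$ must come from $w_i$, and if $w'_i$ were a $3$ its two zeros would each need the $1$ of their own block strictly to their left --- but the only such $1$ available is that of $w_i=2$, which owns only one zero. A size-$1$ block cannot be stretched to fill a run of length $2$, whereas a size-$2$ block can shrink its visible run to length $1$. This makes the matrix triangular for the lexicographic order with $2<3$, with $1$'s on the diagonal (the identity is the only good-shuffle fixing $\cat w$), which is still unimodular and yields $\ZZ$-linear independence by downward induction on the lex order. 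So your argument is repairable by replacing ``diagonal'' with ``triangular'' and extracting coefficients in lex order; as written, the key step fails.
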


\begin{proof}
  Because $C_{10,100}$ is generated by $2$ and $3$, its dimensions are
  bounded by the dimensions of the free zinbiel algebra in two
  generators.

  It is therefore enough to prove that the dimension of the homogeneous
  component of any given bidegree $(k,\ell)$ with respect to $2$ and
  $3$ is at least the number of words with $k$ letters $2$ and $\ell$
  letters $3$.

  Let us fix $(k,\ell)$ and consider the set $S$ of words with $k$
  letters $2$ and $\ell$ letters $3$. Let us endow $S$ with the
  lexicographic order induced by the ordering of letters $2 < 3$.

  Let $M$ be the square matrix with rows and columns indexed by $S$
  with coefficient $M_{w,w'}$ being the number of occurrences of the word
  $\cat w'$ in the expansion of $\sh(w)$ as a sum of words.

  Let us prove that $M$ is upper triangular with $1$ on the diagonal.

  \begin{lemma}
    \label{lemme_facile}
    Let $w,w' \in S$ such that $M_{w,w'} \not= 0$ and $w$ shares a
    prefix with $w'$. Then the restriction to the common prefix of any
    good-shuffle that maps $\cat w$ to $\cat w'$ is the identity.
  \end{lemma}
  \begin{proof}
    This is proved by induction on the length $i$ of the common
    prefix, starting from the empty prefix. Let us assume the induction hypothesis before $w_i$ and
    that moreover $w_i = w'_i$. The $1$ in $w'_i$ must come from the
    $1$ in $w_i$ because the $1$'s remain in the same order. Then the
    $0$ (one or two) from $w'_i$ must be to the right of their
    associated $1$, which must therefore be the $1$ coming from
    $w_i$. The only possible way is that these $0$ are not shuffled
    and remain at their initial positions in $w_i$.
  \end{proof}

  In the case $w=w'$, this lemma implies that the diagonal of $M$ is
  made of $1$'s.

  Now consider $w \not= w' \in S$ such that $M_{w,w'} \not= 0$ and the
  first different letter happens at position $i$, where $w_i \not= w'_i$.
  Assume by contradiction that $w_i = 2$ and $w'_i=3$. By the lemma \ref{lemme_facile},
  any good-shuffle that maps $\cat w$ to $\cat w'$ is the identity on the
  common prefix of $w$ and $w'$. Therefore the letter $1$ in $w'_i$
  comes from the letter $1$ in $w_i$. But then the two $0$ in $w'_i$
  need to have their associated $1$ on their left, which is not
  possible.
\end{proof}



\subsection{Words $10$ and $110$}

Let us now turn to the other case, slightly more complicated.

Let $C_{10,110}$ be the zinbiel sub-algebra of $\As$ generated by
the words $10$ and $110$. Let us denote in this section the word $10$ by $2$ and the
word $110$ by $3$. The algebra $C_{10,110}$ is bigraded by the number
of $2$ and the number of $3$, as $10$ and $110$ are homogeneous in
$\As$ with linearly independent bidegrees.

For a word $w$ in the alphabet $\{2,3\}$, let $\cat w$ be the word in
$\As$ obtained from $w$ by the substitution $2 \mapsto 10$ and
$3 \mapsto 110$.

\begin{thm}
  The zinbiel algebra $C_{10,110}$ is free over $\QQ$.
\end{thm}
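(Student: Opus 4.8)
The plan is to mimic the strategy of the $(1,0)$ case as closely as possible, namely to fix a bidegree $(k,\ell)$ in the numbers of $2$ and $3$, let $S$ be the set of words in $\{2,3\}$ with $k$ letters $2$ and $\ell$ letters $3$, and build the square matrix $M$ indexed by $S$ whose entry $M_{w,w'}$ counts the occurrences of the word $\cat w'$ in the expansion of $\sh(w)$. As before, it suffices to prove that $M$ is invertible; since the dimensions of $C_{10,110}$ in bidegree $(k,\ell)$ are bounded above by the number of words with $k$ twos and $\ell$ threes, invertibility of $M$ forces equality of dimensions and hence freeness. The first thing I would record is the analogue of the structural observation for this case: by Lemma~\ref{shuffle_A10}, in any good-shuffle contributing to $\sh(w)$ the letters $1$ from each generator keep their relative order, and the letter $0$ coming from a given $w_i$ lands somewhere strictly to the right of \emph{both} $1$'s of that $w_i$ (for a $3$) or to the right of its single $1$ (for a $2$).

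Next I would choose the ordering on $S$ that makes $M$ triangular. In the $(1,0)$ case the lexicographic order with $2<3$ worked because a $3$ contributes two $0$'s that need room to the right. Here a $3=110$ contributes two $1$'s, so the relevant scarce resource is to the left rather than the right, and I expect the correct convention to be the lexicographic order with $3<2$ (or equivalently the lexicographic order on the reversed words, or $2<3$ read from the right). With that ordering in place, I would prove the exact analogue of Lemma~\ref{lemme_facile}: if $M_{w,w'}\neq 0$ and $w,w'$ share a prefix, then any good-shuffle carrying $\cat w$ to $\cat w'$ is the identity on the common prefix. The induction step is where the two $1$'s of a $3$ matter: assuming the shuffle is already the identity up to position $i-1$ and $w_i=w'_i$, the leading $1$ of $w'_i$ must be the leading $1$ of $w_i$ (orders preserved), and then tracking the second $1$ (if $w_i=3$) and the $0$ shows nothing from $w_i$ can have moved. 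The diagonal of $M$ is then all $1$'s, exactly as before.

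The crux is the off-diagonal triangularity argument: given $w\neq w'$ in $S$ with $M_{w,w'}\neq 0$, let $i$ be the first position where they differ and derive a contradiction from the "wrong" ordering. By the lemma the shuffle is the identity on the common prefix, so the first $1$ of $w'_i$ comes from the first $1$ of $w_i$. Suppose $w'_i=3$ and $w_i=2$ (the case the ordering should forbid): then $w'_i$ needs a second $1$ immediately after, and since it cannot come from $w_i$ it must come from some later generator $w_j$ with $j>i$; but $w_j$'s $1$ occurring before $w_i$'s $0$ (which sits just after $w_i$'s $1$ in $\cat w$, and can only move rightward) while $w_i$'s $1$ precedes $w_j$'s $1$ forces a configuration in $\cat w'$ that contradicts the "$0$ to the right of its $1$'s" constraint for one of the generators — or, depending on which convention I settle on, it is the symmetric case $w_i=3$, $w'_i=2$ that is excluded. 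I would pin down the arithmetic of which direction is blocked by carefully writing out the positions of the three relevant letters. The main obstacle I anticipate is precisely that this case analysis is genuinely more delicate than in the $(1,0)$ case: a $3=110$ interacts with its neighbours through the placement of an extra $1$ that can be "borrowed" from a later generator, so ruling out the bad direction may require looking not just at position $i$ but at the first later position where the two words can be realigned, and I should be prepared for the triangularity to hold only after a slightly cleverer choice of total order on $S$ than the naive lexicographic one.
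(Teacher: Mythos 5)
There is a genuine gap, and it sits exactly where you placed all your confidence: the claimed analogue of Lemma~\ref{lemme_facile} is false in this case, for either choice of lexicographic order. Take $w=w'=(3,2)$, so $\cat w = 11010$. Besides the identity, there is a second good-shuffle of $110$ and $10$ producing $11010$: send the leading $1$ of the $3$ to position $1$, the whole $10$ coming from the $2$ to positions $2$--$3$, and the trailing $10$ of the $3$ to positions $4$--$5$. The first letters stay in order and every $0$ sits to the right of its associated $1$'s, so this is a legitimate good-shuffle that is \emph{not} the identity on the ``common prefix''. Consequently the diagonal entry of $M$ at $(3,2)$ is $2$, not $1$, and your assertion that ``the diagonal of $M$ is then all $1$'s, exactly as before'' fails. (This is also why the paper states this theorem over $\QQ$ rather than over $\ZZ$ as in the $100$ case: the diagonal entries are positive integers, generally $>1$ — in fact products of block lengths in the factorization of $w$ into blocks $322\cdots2$.) Reversing the order to $3<2$ does not help: the failure is not about which off-diagonal triangle survives, but about the prefix-rigidity statement itself.

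What is actually needed is a weaker dichotomy (the paper's Lemma~\ref{lemme_complexe}): a good-shuffle carrying $\cat w$ to $\cat w'$ over a common prefix either stabilizes that prefix, or performs a specific cascading shift in which the rightmost $3$ of the prefix lends its trailing $10$ to the suffix while each subsequent $2$ (all letters after that $3$ in the prefix must be $2$'s, as must the next letter $w_{i+1}$) slides one slot to the left. Proving this requires an induction whose hypothesis carries \emph{two} alternative states, not one, and the off-diagonal exclusion then uses the fact that case (ii) forces $w_{i+1}=2$, which is incompatible with $w_i=3$ at the first disagreement. Your off-diagonal sketch is in any case left unresolved (``or, depending on which convention I settle on, it is the symmetric case''), so even granting your lemma you have not pinned down the triangularity direction. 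As written, the proposal does not constitute a proof; repairing it essentially requires discovering the two-case structure of Lemma~\ref{lemme_complexe}.
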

\begin{proof}
  Because $C_{10,110}$ is generated by $2$ and $3$, its dimensions are
  bounded by the dimensions of the free zinbiel algebra in two
  generators.

  It is therefore enough to prove that the dimension of the homogeneous
  component of any given bidegree $(k,\ell)$ with respect to $2$ and
  $3$ is at least the number of words with $k$ letters $2$ and $\ell$
  letters $3$.

  Let us fix $(k,\ell)$ and consider the set $S$ of words with $k$
  letters $2$ and $\ell$ letters $3$. Let us endow $S$ with the
  lexicographic order induced by the ordering of letters $2 < 3$.

  Let $M$ be the square matrix with rows and columns indexed by $S$
  with coefficient $M_{w,w'}$ being the number of occurrences of the word
  $\cat w'$ in the expansion of $\sh(w)$ as a sum of words.

  Let us prove that $M$ is lower triangular with no zero on the diagonal.

  Let us first remark that the diagonal coefficient for a word $w$ in
  $S$ counts the number of good-shuffles that preserves $\cat w$. But
  this set always contains the identity shuffle.

  \begin{lemma}
    \label{lemme_complexe}
    Let $w,w' \in S$ such that $M_{w,w'} \not= 0$ and $w$ shares a
    prefix $w_1\dots w_i$ with $w'$. Let $\sigma$ be any good-shuffle
    that maps $\cat w$ to $\cat w'$. Then
    \begin{itemize}
    \item[(i)] either $\sigma$ stabilizes the common prefix,
    \item[(ii)] or the following statements hold:
      
      - There exists a letter $3$ in the prefix. Let $w_k$ be the
      rightmost such letter. The shuffle
      $\sigma$ stabilizes the prefix before $w_k$.

      - The letter $w_{i+1}$ is $2$.

      - Between $w_k$ and $w_{i+1}$, the shuffle $\sigma$ acts like this:
      \begin{equation*}
        \begin{array}{rr|rrrr|r}
             & \dots & w_k = 3 & 2 & \dots & w_i = 2 & w_{i+1} = 2\\
          w  & \dots & {\color{red}1}{\color{blue}10} & 10 & \dots & 10 & 10\\
          w' & \dots & {\color{red}1}10 & 10 & \dots & 10 & \\
        \end{array}
      \end{equation*}
      where each $10$ (from a letter $2$) displayed in the line $w$ is
      send to the final $10$ (from a letter $3$ or $2$) in the
      previous term of the line $w'$. The first ${\color{red} 1}$ of
      $w_k$ is sent to the first ${\color{red} 1}$ of $w'_k$ and the
      final ${\color{blue}10}$ of $w_k$ is sent in the suffix of $w'$
      after $w'_i$.
  \end{itemize}
  \end{lemma}
  \begin{proof}
    This is proved by induction on the length of the common prefix,
    starting with the empty prefix.

    Assume first the induction hypothesis with condition $(i)$ before
    $w_i$ and moreover $w_i = w'_i$. Necessarily, the first $1$ in
    $w'_i$ must come from the first $1$ in $w_i$, as the order is
    preserved on the first letters.

    If $w_i$ is $2$, the $0$ in $w'_i$ must come from the $0$ in
    $w_i$, as the only available $1$ to its left is that of $w'_i$. So
    condition $(i)$ holds for the extended prefix.

    If $w_i$ is $3$, and if the second $1$ in $w'_i$ comes from $w_i$
    too, one finds that condition $(i)$ holds for the extended prefix,
    for the same reason as in the previous case.

    Otherwise, the second $1$ in $w'_i$ must come from $w_{i+1}$. And
    the $0$ in $w'_i$ must be preceded by all the associated $1$'s. This
    implies that this $0$ comes from $w_{i+1}$ and that $w_{i+1} = 2$.
    All this gives condition $(ii)$ for the extended prefix, in
    the special situation where $k = i$.

    Assume now the induction hypothesis with condition $(ii)$ before
    $w_i$ and moreover $w_i = w'_i$. Then $w_{i}$ is a $2$.

    Assume first that $\sigma$ sends the second $1$ of $w_k$ to the $1$
    in $w'_i$.  Then the $0$ in $w_k$ must be sent to the $0$ in
    $w'_i$.  Therefore in this case, one obtains condition $(i)$ for
    the extended prefix.
    
    Otherwise, $\sigma$ sends the second $1$ of $w_k$ somewhere in the
    suffix of $w'$ after $w'_i$. Then the first $1$ of $w_{i+1}$ must
    be sent to the first one of $w'_i$.  And
    the $0$ in $w'_{i}$ must be preceded by all the associated $1$'s. This
    implies that this $0$ comes from $w_{i+1}$ and that $w_{i+1} = 2$.

    The first statement of condition $(ii)$ holds by induction. We
    just proved the two other statements, so condition $(ii)$ holds
    for the extended prefix.
  \end{proof}


  Now consider $w \not= w' \in S$ such that $M_{w,w'} \not= 0$ and the
  first different letter happens at position $i$ where $w_i \not= w'_i$.
  Assume by contradiction that $w_i = 3$ and $w'_i=2$. Let $\sigma$ be
  any good-shuffle that maps $\cat w$ to $\cat w'$.

  By the lemma \ref{lemme_complexe}, either condition $(i)$ or
  condition $(ii)$ holds for $\sigma$.

  Condition $(ii)$ cannot hold because $w_i = 3$.
  
  Therefore condition $(i)$ holds. The unique letter $1$ in $w'_i$ comes
  from the first letter $1$ in $w_i$. But then the $0$ in $w'_i$
  either comes from $w_i = 3$ or from some $w_j$ with $j > i$. In both
  cases, this $0$ has not enough $1$'s on its left.
\end{proof}


Remark: One can deduce from the proof of lemma \ref{lemme_complexe} a more precise description of
the diagonal coefficients of the matrix $M$. The coefficient of a word $w$
in $2$ and $3$ is the product of the lengths of the blocks in the
unique factorization of $w$ into blocks $322\dots2$, omitting the
possible initial sequence of $2$. For example, for $(2, 3, 3, 2, 3, 2)$, one gets $4 = 1 \times 2 \times 2$.

\subsection{Remarks and questions}

One could ask the same question of freeness about several larger zinbiel sub-algebras of $\As$.

Sometimes the answer is clearly  negative, for the same reasons as for
$\As$. This is for instance the case of the sub-algebra generated
by the words $10,110,1110$ which contains one relation
\eqref{rels}.

The following cases may be free, as these algebras do not contain any obvious relation of this kind.
\begin{itemize}
  \item[(A)] the sub-algebra generated by words $10, 100, 110$,
  \item[(B)] the sub-algebra generated by words of the shape $10^\ell$,
  \item[(C)] the sub-algebra generated by $110$ and words of the shape $10^\ell$,
  \item[(D)] the sub-algebra generated by words not starting by $11$,
  \item[(E)] the sub-algebra generated by $110$ and words not starting by $11$.
\end{itemize}

Some closely related questions have been answered in \cite[\S
4]{free_dual_leibniz}.

\section{Quotient map to motivic multiple zeta values}

We will use the following convention for formal iterated integrals:
\begin{equation}
  \label{iterated}
  I(\ep_1, \dots, \ep_k) = \mathop{\int\cdots\int}\limits_{0<t_1<\cdots <t_k<1} \omega_{\ep_1}(t_1) \cdots \omega_{\ep_k}(t_k),
\end{equation}
where each $\ep_i$ is either $0$ or $1$, with $\ep_1=1$ and $\ep_k=0$.

We will denote motivic multiple zeta values $\zeta(n_1,\dots,n_k)$, with the convention
\begin{equation}
  \label{conversion}
  \zeta(k_1,\dots,k_r) = I(1,0^{k_1-1},1,0^{k_2-1},\dots,1,0^{k_r-1})
\end{equation}
for $r \geq 1$. Here a power of $0$ means a repeated $0$.

Let us denote by $\Pi$ the surjective quotient map from $\As$ to
$\AMZV$, whose kernel is the ideal of motivic relations between formal
iterated integrals.

For $(u,v)$ not both zero, the space $C_{u,v}$ is a zinbiel
sub-algebra of $\As$, hence also a commutative sub-algebra
of $\As$ for the symmetrized product, which is just the shuffle
product.

For every choice of parameters $(u,v)$ not both zero, one can
therefore restrict $\Pi$ to this commutative sub-algebra $C_{u,v}$ of
$\As$. This gives a morphism of commutative graded algebras $\Pi$ from
$C_{u,v}$ to $\AMZV$. Note that these two graded algebras have the
same generating series $F = 1/(1-x^2-x^3)$.

One can therefore wonder if the morphism $\Pi$ could be an
isomorphism, under some conditions on $(u,v)$. As the algebra
$C_{u,v}$ itself, this property only depend on the projective class of
$(u,v)$ in $\PP^1$.

Let us consider first the very special case where $u+v=0$. In this
case, the image of the word $z_3$ in $C_{u,v}$ is given by
$\Pi(z_3) = u \zeta(3) + v \zeta(1,2)$. But it is known that
$\zeta(3) = \zeta(1,2)$ in $\AMZV$, hence $\Pi(z_3) = 0$. Therefore
$\Pi$ is not surjective in this case.

Using a computer, one can compute the first few graded dimensions of the image.
\begin{conjecture}
  When $u+v = 0$, the image of $C_{u,v}$ by $\Pi$ is a sub-algebra of
  $\AMZV$ with generating series $1 + x^2 F$.
\end{conjecture}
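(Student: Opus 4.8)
The plan is to exploit the fact that when $u+v=0$ the generator $z_3$ maps to zero, so $\Pi$ factors through the quotient of $C_{u,v}$ by the ideal (for the shuffle product) generated by $z_3$. Concretely, fix the projective class $(u,v)=(1,-1)$, so $z_3 = (1,0,0)-(1,1,0)$. By Theorem~1 the algebra $C_{u,v}$ is a free zinbiel algebra on $z_2,z_3$, with the words $\sh_{u,v}(w)$, $w$ a word in $\{2,3\}$, forming a basis; the associated commutative (shuffle) algebra is then the free shuffle algebra on the same two generators, which as a commutative algebra is a polynomial-like object whose Poincaré series is $F=1/(1-x^2-x^3)$. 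Since $\Pi(z_3)=0$, the image $\Pi(C_{u,v})$ is spanned by the $\Pi(\sh_{u,v}(w))$ for words $w$ using only the letter $2$, together with products; i.e.\ $\Pi(C_{u,v})$ is the sub-algebra of $\AMZV$ generated by $\Pi(z_2)=\zeta(2)$. As a commutative algebra the shuffle-powers of $\zeta(2)$ span a polynomial ring in one degree-$2$ variable, whose Poincaré series is $1/(1-x^2)$, not $1+x^2F$. So the naive factorization is \emph{not} the whole story: the image must be strictly larger, and the content of the conjecture is exactly to pin down how much larger.

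The key realization is that, although $\Pi(z_3)=0$, the zinbiel sub-algebra structure survives under $\Pi$ only partially — $\Pi$ is only a morphism for the \emph{commutative} product $\sha$, not for $\prec$ — so elements like $\Pi(z_3 \prec z_2)$ or $\Pi(z_2 \prec z_3)$ need not vanish even though $\Pi(z_3)=0$. Indeed $z_2\prec z_3$ and $z_3\prec z_2$ are genuine elements of $C_{u,v}$ of degree $5$ that are \emph{not} shuffle-multiples of $z_3$, and their images are (conjecturally) nonzero. So the right approach is: (1) identify, inside $C_{u,v}$, a graded vector-space complement to the shuffle-ideal $(z_3)$ — by freeness this complement is spanned by the basis elements $\sh_{u,v}(w)$ with $w$ having \emph{no} letter $3$, which gives series $1/(1-x^2)$; but then add back the images of all $\sh_{u,v}(w)$ and show the span stabilizes with series $1+x^2F$. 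Equivalently, (2) guess that $\Pi(C_{u,v})$ equals the sub-algebra of $\AMZV$ generated by $\zeta(2)$ together with all $\Pi(\sh_{u,v}(w))$ for $w$ starting with $3$, and that the latter span, modulo $\zeta(2)$-multiples, a space isomorphic to $x^2 F$. The cleanest formulation to aim for: the image is $\QQ \oplus \zeta(2)\cdot \Pi(C_{u,v})'$ where $\Pi(C_{u,v})'$ is a copy of the whole algebra with series $F$ — matching $1 + x^2 F$ as $1 + x^2 F$.

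The concrete steps I would carry out are: first, establish the \textbf{upper bound} $\dim \le [x^n](1+x^2F)$ by showing every $\Pi(\sh_{u,v}(w))$ with $w$ containing a $3$ can be rewritten, using the zinbiel axiom \eqref{axiom} and the single relation $\zeta(3)=\zeta(1,2)$ (equivalently $\Pi(z_3)=0$), as a $\zeta(2)$-shuffle-multiple of a shorter image plus lower terms — i.e.\ every word $w$ containing a $3$ reduces, after applying $\Pi$, into $\zeta(2)\sha(\text{image of }w'')$ for some shorter $w''$, which would force the generating series to be at most $1 + x^2 F$; second, establish the \textbf{lower bound} by a direct computation in low weight (the conjecture says "compute the first few graded dimensions"), and then by producing enough explicitly independent elements — the natural candidates are $\zeta(2)^{\sha a}$ and $\zeta(2)^{\sha a}\sha \Pi(\sh_{u,v}(3\,u_1\cdots u_m))$ — using known linear-independence results for motivic MZVs (Brown \cite{brown_annals}) to certify independence. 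The \textbf{main obstacle} is the lower bound: proving that these images are linearly independent in $\AMZV$ requires either an explicit basis-expansion argument inside the motivic algebra (hard, since the paper itself notes the reduction of these elements to standard MZVs is not simple) or a clever use of the motivic coproduct to separate them — and the coproduct in these zinbiel-flavoured bases is, as the introduction admits, not transparent. A realistic fallback is to prove the upper bound unconditionally and reduce the lower bound to a statement purely about the free zinbiel algebra $C_{u,v}$ modulo the explicit relation $z_3 \equiv 0$ together with all relations \eqref{rels} pulled back along $\Pi$, then verify that quotient has series exactly $1+x^2F$ by an independent combinatorial argument on words in $\{2,3\}$.
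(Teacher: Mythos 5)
This statement is a \emph{conjecture} in the paper: the author gives no proof, only the remark that the first few graded dimensions of the image were computed by machine. So there is no proof of record to compare yours against; the only question is whether your argument actually settles the statement, and it does not. What you have written is a strategy, and both of its pillars are left unestablished. The proposed upper bound rests on the assertion that every $\Pi(\sh_{u,v}(w))$ with $w$ containing a letter $3$ can be rewritten as a $\zeta(2)$-shuffle-multiple of something shorter ``using the zinbiel axiom \eqref{axiom} and $\Pi(z_3)=0$''. But, as you yourself point out, $\Pi$ is a morphism only for $\sha$, not for $\prec$, so the zinbiel axiom cannot be applied after $\Pi$; and before applying $\Pi$ the element $\sh_{u,v}(w)$ is \emph{not} in the shuffle ideal of $C_{u,v}$ generated by $z_3$ (this is exactly what the freeness theorem guarantees). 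To get a codimension of $x^3F$ you would need to exhibit explicit elements of $\ker\Pi$, i.e.\ actual motivic relations beyond $\zeta(3)=\zeta(1,2)$, and no such family is identified. The lower bound (linear independence of the surviving images inside $\AMZV$) you acknowledge as the main obstacle and do not address beyond low-weight computation, which is precisely the evidence the paper already reports. Finally, the fallback of presenting the quotient of $C_{u,v}$ by $z_3$ ``together with all relations \eqref{rels} pulled back along $\Pi$'' is not available: the relations \eqref{rels} already hold in $\As$ and hence in $C_{u,v}$ before applying $\Pi$, while $\ker\Pi$ itself is by definition the non-explicit ideal of all motivically provable relations, so that quotient cannot be described combinatorially without already knowing the answer.

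On the positive side, your diagnosis of why the statement is delicate is correct and worth keeping: $\Pi(z_3)=0$ kills only the shuffle ideal generated by $z_3$, whose complement in $C_{u,v}$ has series $1/(1-x^2)$, whereas the images of the zinbiel words $\sh_{u,v}(w)$ with $w$ containing a $3$ need not vanish and must account for the gap between $1/(1-x^2)$ and $1+x^2F=(1-x^3)F$. This explains why the image sits strictly between $\QQ[\zeta(2)]$ and $\AMZV$, consistently with the paper's framing. But after your proposal the statement remains what it was: an open conjecture supported by computer evidence in low weight.
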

This is the generating series of the quotient algebra of $\AMZV$ by $\zeta(3)$,
because
\begin{equation}
  1 + x^2 F = (1 - x^3) F.
\end{equation}
One can wonder what could be, for this sub-algebra, an analog of the
famous conjecture of Broadhurst-Kreimer describing the dimensions of
$\AMZV$ according to both weight and depth.

Note also the similarity with the quotient algebra of $\AMZV$ by
$\zeta(2)$, which has generating series
\begin{equation}
  1 + x^3 F = (1 - x^2) F
\end{equation}
and appears in the motivic coproduct and in the
study of $p$-adic multiple zeta values \cite{furusho_padic1, furusho_padic2}.

\medskip

Excluding from now on the special case $u+v=0$, one can assume without
loss of generality that $u + v = 1$ and set $v = 1 - u$. Then
$\Pi(z_3)=\zeta(3)$.

When using $u$ as a formal parameter, one expects the following statement.
\begin{conjecture}
  The morphism $\Pi$ from $C_{u,1-u}$ to $\AMZV$ is generically an
  isomorphism of commutative algebras.
\end{conjecture}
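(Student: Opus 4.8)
The plan is to combine the dimension bound of Theorem~1 with Brown's results on $\AMZV$, and then to run an induction on the weight using the motivic coaction. Since $C_{u,1-u}$ is a free zinbiel algebra on a degree-$2$ and a degree-$3$ generator, its homogeneous component of weight $n$ has dimension equal to the number of words in $\{2,3\}$ of weight $n$, i.e. the $n$-th coefficient of $F$; by Brown this is also $\dim \AMZV_n$. Hence in each weight $\Pi$ restricts to a linear map between two $\QQ$-vector spaces of the same finite dimension, and it is an isomorphism if and only if it is injective, equivalently surjective. Writing $\Pi$ in weight $n$ in the basis $\{\sh_{u,v}(w)\}$ of $C_{u,1-u}$ and the Hoffman basis $\{\zeta(w')\}$ of $\AMZV$ (Brown, \cite{brown_annals,hoffman}), with $w,w'$ words in $\{2,3\}$ of weight $n$, one gets a square matrix $M_n(u)$ with entries in $\QQ[u]$, and the conjecture is exactly that $\det M_n(u)\not\equiv 0$ for every $n$; "generically an isomorphism" then holds for $u$ outside the countable set $\bigcup_n\{\det M_n(u)=0\}$, equivalently after base change to $\QQ(u)$. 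One may refine this: both $C_{u,1-u}$ (a shuffle algebra, hence polynomial on Lyndon words) and $\AMZV$ (polynomial as a graded $\QQ$-algebra, by Brown) are free commutative algebras with the same Hilbert series $F$, so $\Pi$ is an isomorphism if and only if the induced map on indecomposables is injective, that is, if and only if the classes of $\Pi(\sh_{u,v}(\ell))$ for $\ell$ a Lyndon word in $\{2,3\}$ are linearly independent in $\AMZV_{>0}/(\AMZV_{>0})^2$.

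Since $\det M_n$ is a polynomial in $u$, it suffices to produce, for each $n$, one value $u_0$ at which $\Pi|_{C_{u_0,1-u_0}}$ is injective in weight $n$; the word "generically" is then forced on us by the fact that the top-degree-in-$u$ behaviour of the generators points in the degenerate direction $u+v=0$, where $\Pi(z_3)=0$. The natural candidates are $u_0=1$ and $u_0=0$, where the generators $\Pi(\sh_{u_0,v_0}(\ell))$ are the arborified multiple zeta values of \cite{manchon,clavier,ono}: a linear independence statement for $\{2,3\}$-arborified multiple zeta values in $\AMZV$ would finish the argument at once, and failing that one could try a well-chosen rational or $p$-adic specialization. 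To prove injectivity at a chosen $u_0$, I would induct on the weight using Brown's motivic coaction on $\AMZV$ and its infinitesimal components $D_{2r+1}$, which detect the indecomposables: one expands $D_{2r+1}\,\Pi(\sh_{u_0,v_0}(\ell))$ in $\AMZV$ by applying Goncharov's coaction formula for iterated integrals termwise to the good-shuffle expansion of $\sh(z_{\ell_1},\dots,z_{\ell_k})$ (Lemma~\ref{shuffle_A10}), and shows that, with respect to a suitable ordering of Lyndon words (by length, or by the depth filtration), the resulting principal parts form a triangular system with non-vanishing diagonal. This would be an adaptation of Brown's proof of the Hoffman basis, the extra arithmetic input being a non-vanishing statement of the same flavour as the evaluation of $\zeta(2,\dots,2,3,2,\dots,2)$ (Zagier), now for the specific $\QQ$-linear combinations of iterated integrals produced by the zinbiel products.

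The main obstacle is this last step. As the text notes, $C_{u,v}$ is not stable under the motivic coproduct, so the induction cannot remain inside $C_{u,v}$: each $D_{2r+1}\,\Pi(\sh_{u,v}(w))$ must be understood as an element of $\AMZV$, and controlling its expansion in the Hoffman basis requires knowing exactly which motivic relations hold among the many iterated integrals $I(\ep_1,\dots,\ep_N)$ occurring in the good-shuffle expansion of $\sh(z_{w_1},\dots,z_{w_k})$. Proving the required triangularity together with the accompanying determinant non-vanishing is essentially as deep as the conjecture itself; a reasonable partial target is to establish it first for restricted families of Lyndon words — for instance the depth-minimal ones of the form $2\dots2\,3$, where Brown's $2,3$ computation applies most directly — and then attempt to bootstrap to the general case. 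For small weights the matrices $M_n(u)$ can be computed explicitly and their determinants turn out to be nonzero, which is the numerical evidence behind the conjecture.
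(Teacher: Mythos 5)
This statement is a \emph{conjecture} in the paper: the author offers no proof, and in fact reports evidence pointing the other way for any attempt at a uniform specialization --- new singular values of $u$ keep appearing as the weight grows, and no non-singular rational value of $u$ is currently known. So there is no proof of the paper's to compare yours against; what can be assessed is whether your plan closes the gap, and it does not. Your reduction is correct as far as it goes: by Theorem~1 the weight-$n$ component of $C_{u,1-u}$ has dimension equal to the $n$-th coefficient of $F$, by Brown so does $\AMZV_n$, and hence the conjecture is equivalent to $\det M_n(u)\not\equiv 0$ in $\QQ[u]$ for every $n$ (equivalently, injectivity after base change to $\QQ(u)$, or injectivity on indecomposables). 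You also correctly identify why no cheap leading-term argument in $u$ works: the top-degree direction of $z_3$ is $(u,v)=(1,-1)$, precisely the degenerate case where $\Pi(z_3)=0$.

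The genuine gap is the entire arithmetic content: nothing in your proposal establishes $\det M_n(u)\not\equiv 0$ for a single weight beyond those already checked by computer, and you concede the point yourself. Both of your suggested routes rest on open inputs. The specialization to $u_0\in\{0,1\}$ requires the linear independence in $\AMZV$ of the $\{2,3\}$-indexed arborified multiple zeta values, which is not known (the paper only says these cases are ``specially interesting''). The coaction induction modelled on Brown's proof of the Hoffman basis requires expanding $D_{2r+1}\Pi(\sh_{u_0,1-u_0}(w))$ and proving a triangularity-plus-nonvanishing statement analogous to Zagier's evaluation of $\zeta(2,\dots,2,3,2,\dots,2)$; but since $C_{u,v}$ is not stable under the coaction, this forces you to control the motivic relations among all iterated integrals appearing in the good-shuffle expansion, which is exactly the unknown one is trying to circumvent. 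In short, your write-up is a reasonable research programme and a faithful reformulation of the conjecture as a determinant non-vanishing problem, but it is not a proof, and the step you defer is the whole theorem.
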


Let us say that a value of $u$ is \textbf{non-singular} if this
statement holds at $u$, and \textbf{singular} otherwise. For every non-singular $u$, the isomorphism
$\Pi$ defines a bigrading of the algebra $\AMZV$. Moreover, one gets a
basis in $\AMZV$ from the basis of $C_{u,1-u}$ made of words in $z_2$
and $z_3$.

More and more singular values appear when considering the restriction
of $\Pi$ at increasing weights. One could still hope that some specific
values of $u$ are non-singular, for example $u=0$ and $u=1$. So far,
no non-singular value of $u$ is known.

Let us describe the first few polynomials whose zeroes are singular
values, in their order of apparition, where $n$ is the weight.
\begin{equation*}
  \begin{array}{rr}
    n & p\\
    5 & 5 u - 6 \\
    7 & 14 u + 51 \\
    8 & 27 u^2 - 26 u + 10 \\
    9 & 865 u - 4164 \\
    10 & 2011 u^2 - 3381 u + 1581 \\
    11 & 461516 u^4 - 3721029 u^3 + 7046644 u^2 - 6169912 u + 2357966 \\
    12 & 207786 u^4 - 185687 u^3 - 1076020 u^2 + 1483088 u - 562680 \\
  \end{array}
\end{equation*}
One can check that $0$ and $1$ are not roots of any of these polynomials.


\begin{figure}[h]
  \centering
  \includegraphics[width=12cm]{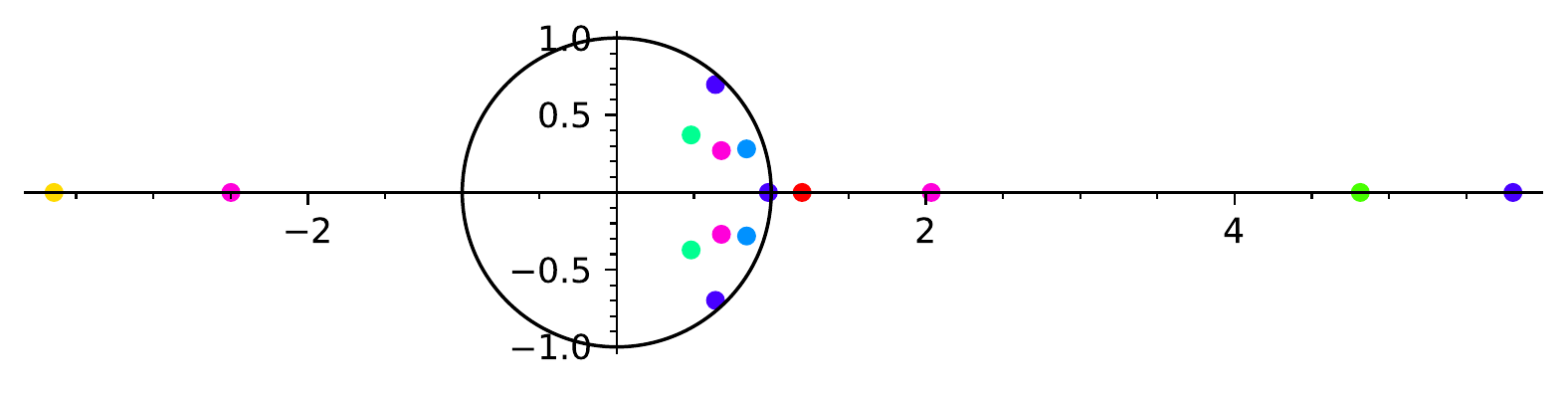}
  \caption{Singular values of $u$ in weight at most $12$.}
  \label{fig:singular}
\end{figure}









Let us give the first few images by the morphism $\Pi$ from $C_{u,1-u}$
to $\AMZV$ of short words in $z_2$ and $z_3$.

\begin{align}
  B(2) &= \zeta(2)\\
  B(3) &= \zeta(3)\\
  B(2,2) &= 2\zeta(1,3) + \zeta(2,2)\\
  B(2,3) &= \left(u + 2\right)\zeta(1,4) + \zeta(2,3) + \left(-u + 1\right)\zeta(3,2)\\
  B(3,2) &= \left(-u + 4\right)\zeta(1,4) + 2\zeta(2,3) + u\zeta(3,2).
\end{align}

One can then check that $ B(2) B(3) = B(2,3) + B(3,2)$, as a simple
example of the general rule that the product in the conjectural bases
is given by the shuffle product.

\section{Variants of multiple zeta values}

There are some other situations where one could try to apply the same ideas.

A first example is given by the algebra of alternating multiple zeta
values, defined as the iterated integrals of the following three
$1$-forms:
\begin{equation}
  \omega_0 = \frac{dt}{t}, \omega_{-1}=\frac{dt}{t-1}, \omega_1=\frac{dt}{t+1}.
\end{equation}
A conjecture due to Broadhurst (see \cite{broadhurst_kreimer, broadhurst}) states
that the graded dimensions of this algebra are given by the Fibonacci
numbers, with generating series $1/(1-x-x^2)$.

One could therefore consider the zinbiel sub-algebra of the free zinbiel
algebra on $\{-1,0,1\}$ generated by the abstract iterated integrals
$I(-1)$ and $I(1,0)$. Is this a free zinbiel algebra on these generators ?

\medskip

A similar case is the algebra of multiple Landen values, defined in
\cite{broadhurst_landen} as iterated integrals of the following
$1$-forms: $A=dx/x$, $B=dx/(1-x)$, $F=dx/(1-\rho^2x)$ and
$G=dx/(1-\rho)$ where $\rho$ is the golden ratio.

Broadhurst conjectured in \cite{broadhurst_landen} that the generating
series for this algebra is $1/(1-x-x^2-x^3)$, whose coefficients are
tribonacci numbers. The exact same formula is also expected to give
the graded dimensions of the sub-algebra of iterated integrals of $A$
and $G$.

The first few terms are
\begin{equation*}
  1, 1, 2, 4, 7, 13, 24, 44, 81, 149, 274, 504, 927, 1705,\dots
\end{equation*}
and the expected bases, as words in $A$ and $G$ are $\{G\}$ in degree $1$;
$\{AG,GG\}$ in degree $2$ and $\{AGG,AAG,GAG,GGG\}$ in degree $3$.

In this setting, one could look for a free zinbiel sub-algebra on
three generators of degrees $1,2$ and $3$ inside the free zinbiel algebra on
$A$ and $G$.

Another interesting case to consider would be multiple Watson values \cite{broadhurst_watson}.

\bibliographystyle{plain}
\bibliography{article_free_shuffle.bib}

\end{document}